\documentclass[10pt,a4paper]{article}

\usepackage{amsmath,amssymb,amsthm,mathtools}
\usepackage{microtype}
\usepackage{enumitem}
\usepackage[hidelinks]{hyperref}
\usepackage[T1]{fontenc}
\usepackage{lmodern}

\setlist[enumerate]{label=(\roman*),leftmargin=2.3em,itemsep=2pt,topsep=4pt}

\newtheorem{theorem}{Theorem}[section]
\newtheorem{proposition}[theorem]{Proposition}
\newtheorem{lemma}[theorem]{Lemma}
\newtheorem{corollary}[theorem]{Corollary}
\theoremstyle{definition}
\newtheorem{definition}[theorem]{Definition}
\newtheorem{example}[theorem]{Example}
\theoremstyle{remark}
\newtheorem{remark}[theorem]{Remark}

\newcommand{\AS}{Amitsur--Small}
\newcommand{\cent}{\mathrm{C}}

\title{Characterization of the centrally finite Amitsur--Small division rings}
\author{Elad Paran\\[2pt]
\small Department of Mathematics and Computer Science\\
\small The Open University of Israel\\
\small \texttt{paran@openu.ac.il}}
\date{}

\hypersetup{
  pdftitle={Characterization of the centrally finite Amitsur-Small division rings},
  pdfauthor={Elad Paran}
}

\begin{document}
\maketitle

\begin{abstract}
In 1978, Amitsur and Small asked whether, for every division ring $D$, maximal left ideals of $D[x_1,\ldots,x_n]$ contract to maximal left ideals in smaller polynomial subrings. Chapman and the author showed that the answer is negative in general and called $D$ an \emph{\AS{} ring} when this contraction property always holds. Earlier work showed that Hamilton's real quaternion algebra is \AS{}, that division algebras of degree three are not \AS{}, and that degree-two examples are restricted to a specific quaternionic form; subsequent work excluded cyclic division algebras of odd prime degree. We give a complete resolution in the centrally finite case. If $D$ has finite dimension over its center $F$, then $D$ is an \AS{} ring if and only if either $D=F$, or $F$ is real closed and $D$ is the Hamilton quaternion algebra $(-1,-1)_F$. In every other noncommutative centrally finite case, failure already occurs in two variables: there is a maximal left ideal $M\subseteq D[x,y]$ such that $M\cap D[x]$ is not maximal in $D[x]$.
\end{abstract}

\noindent\textbf{Keywords.} Division rings; central simple algebras; maximal left ideals; algebraically closed division rings; real closed fields; \AS{} rings.

\medskip
\noindent\textbf{2010 Mathematics Subject Classification.} Primary 16K20; Secondary 16D25, 12D15.

\section{Introduction}

Let $D$ be a division ring, and let $D[x_1,\ldots,x_n]$ denote the ring of polynomials in $n$ central variables over $D$. In their 1978 paper \cite{AmitsurSmall}, Amitsur and Small proved that if $M$ is a maximal left ideal of $D[x_1,\ldots,x_n]$, then the quotient $D[x_1,\ldots,x_n]/M$ is finite-dimensional as a left vector space over $D$. They called this result a \emph{Nullstellensatz over division rings}. A key step in their proof states that
\[
M\cap D[x_1,\ldots,x_k]\ne 0
\]
for every $1\le k<n$. In the commutative case, the contraction is necessarily maximal. Amitsur and Small asked whether the same is true over an arbitrary division ring.

Robson and Small later proved that contractions of semi-maximal left ideals remain semi-maximal \cite{RobsonSmall}. Maximality, however, does not persist in general. In \cite{ChapmanAuthor}, Chapman and the author introduced the following terminology: a division ring $D$ is called an \emph{\AS{} ring} if maximal left ideals in polynomial rings over $D$ contract to maximal left ideals in every smaller polynomial subring. In that work, we constructed examples of non-\AS{} division rings, showed that Hamilton's real quaternion algebra is \AS{}, and obtained restrictions in degrees two and three. More precisely, division algebras of degree three are never \AS{}, while a degree-two division algebra can be \AS{} only if it is a Hamilton quaternion algebra $(-1,-1)_F$ over a Pythagorean field $F$. Subsequently, Chapman, Levin and Zaninelli \cite{ChapmanLevinZaninelli} excluded cyclic division algebras of odd prime degree.

In the present work we characterize all centrally finite Amitsur--Small division rings and thus complete, in the centrally finite case, the line of investigation initiated by Amitsur and Small. We prove:

\begin{theorem}\label{thm:main}
Let $D$ be a division ring of finite dimension over its center $F$. Then the following conditions are equivalent:
\begin{enumerate}
\item $D$ is an \AS{} ring;
\item either $D=F$, or $F$ is real closed and
\[
D\cong (-1,-1)_F.
\]
\end{enumerate}
If $D$ is noncommutative, these conditions are also equivalent to $D$ being left algebraically closed (equivalently, right algebraically closed). Moreover, if $D$ is noncommutative and does not satisfy the second condition, then there exists a maximal left ideal $M$ of $D[x,y]$ such that $M\cap D[x]$ is not maximal in $D[x]$.
\end{theorem}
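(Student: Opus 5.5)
The plan is to prove three links: that $D$ is \AS{} implies $D$ is left algebraically closed; that a noncommutative, centrally finite, left algebraically closed $D$ satisfies condition (ii); and that condition (ii) implies $D$ is \AS{}. The two-variable witness in the final assertion comes out of the contrapositive of the first link.

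\textbf{From the \AS{} property to algebraic closedness.} Suppose $D$ is noncommutative and some nonconstant $f\in D[x]$ has no left root in $D$. I would take an irreducible factor $p$ of $f$ with no root and consider the maximal left ideal $N=D[x]p$. The quotient $V=D[x]/N$ is then a simple left $D[x]$-module of $D$-dimension $\deg p\ge 2$. I want a maximal left ideal $M\subseteq D[x,y]$ with $M\cap D[x]$ properly contained in a maximal one. The natural attempt is to let $y$ act on $D[x]/D[x]g$ for a suitable $g$ divisible by $p$, say $g=p^2$ or $g=pq$ with a companion factor $q$. I would choose this action as a $D[x]$-linear map that is central-variable compatible: since $x$ and $y$ are central, $y$ must act by an endomorphism commuting with $x$ and with the $D$-action.

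The aim is to make the resulting $D[x,y]$-module $W$ cyclic and simple, while $W$ restricted to $D[x]$ is not simple. The annihilator of a generator $w$ is then a maximal $M$, and $M\cap D[x]=\operatorname{ann}_{D[x]}(w)$ is not maximal.

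Concretely, I would model $W$ as $V\otimes_{F}K$, where $K=F[y]/(h)$ is a field extension. Noncommutativity of $D$ and the absence of roots give room for a twist through the endomorphism ring $\operatorname{End}_{D[x]}(V)$, a division ring containing $F[x]/(\ldots)$. A generic element $y\mapsto\phi$ in that division ring that does not commute appropriately should make the $D[x,y]$-module simple. This construction is the main obstacle, and I would first try it with $V\oplus V$ and $y$ acting by a matrix over $E=\operatorname{End}(V)$ with no invariant subspace. Such a matrix needs $E$ to be noncommutative or to have a proper extension, and that is exactly what the existence of a rootless polynomial should supply.

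\textbf{From algebraic closedness to condition (ii).} If $D$ is left algebraically closed and centrally finite, then every polynomial over $F$ splits into linear factors over $D$, so every element of $\bar F$ of degree $n$ embeds in $D$. Hence the maximal subfields have degree at most $\deg D$, and every finite extension of $F$ has degree dividing $\deg D$, by the degree bound on subfields. A Galois-theoretic argument in the spirit of Artin--Schreier then shows that $\bar F$ is finite over $F$. If $F$ were algebraically closed we would have $D=F$, so $F$ is real closed, and then the Frobenius theorem over real closed fields gives $D\cong(-1,-1)_F$. Conversely, the quaternions over a real closed field are left algebraically closed by the Niven--Jacobson theorem, which transfers to real closed fields by Tarski's principle.

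\textbf{From condition (ii) to the \AS{} property.} The case $D=F$ is commutative and handled by the classical Nullstellensatz. For $D=(-1,-1)_F$ with $F$ real closed, I would use that the known proof for Hamilton's real quaternions from \cite{ChapmanAuthor} relies only on first-order properties of $\mathbb R$: the splitting of polynomials, $\mathbb C$ being algebraically closed, and the description of simple modules. That argument should therefore carry over verbatim with $F(i)$ in place of $\mathbb C$.
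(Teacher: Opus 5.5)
\textbf{The gap: your first link.} This link carries the whole weight of the theorem, and the route you sketch for it does not work.

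For a simple $D[x,y]$-module $W$, which is finite-dimensional over $F$, the central variable $x$ acts by a $D[x,y]$-endomorphism. So $F[x]$ acts through a field $L=F[x]/(c)$, and $W$ restricted to $D[x]$ is a direct sum of copies of the simple $D_L$-module $V$. Since $D[x]/\operatorname{Ann}_{D[x]}(w)\cong D[x]w$, the contraction fails to be maximal only if $D[x]w$ is not simple. This needs two things:
\begin{enumerate}
\item $D_L\cong M_m(\Delta)$ with $m\ge 2$;
\item $w$ chosen outside every simple $D[x]$-submodule, so an arbitrary generator will not do.
\end{enumerate}
Starting from an arbitrary rootless irreducible $p$ gives you no control over $m$. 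Take $D=(-1,-1)_F$ with $F=\mathbb{Q}_{\mathrm{constr}}$ and $p=x^3-2$. It is irreducible in $D[x]$ and has no root in $D$, since a root would generate a cubic subfield. Yet $V=D[x]/D[x]p\cong D_L$ with $L=F(\sqrt[3]{2})$, and $D_L$ is a division algebra because $[L:F]$ is odd. Every cyclic $D[x]$-submodule of $V^n$ is then simple. So no $y$-action and no generator yields a counterexample, even though $\operatorname{End}(V)\cong D_L^{\mathrm{op}}$ is noncommutative and a rootless polynomial is present. Noncommutativity of $\operatorname{End}(V)$ is therefore not the relevant criterion. There is also no reason to fix $W|_{D[x]}\cong V\oplus V$: in general $y$ must act through an extension of arbitrary degree (a cubic one in Example~\ref{ex:constructible}).

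\textbf{The missing idea: the splitting-field obstruction} of Lemmas~\ref{lem:pullback}, \ref{lem:annihilator} and Proposition~\ref{prop:obstruction}. The construction has three steps:
\begin{enumerate}
\item Let $x$ act through a separable splitting field $K=F(\theta)$. Then $V$ is the simple $D_K$-module, $\dim_K V=d\ge 2$ and $\operatorname{End}(V)=K$.
\item Let $y$ act through a proper finite extension $E=K(\alpha)$. Then $W=V_E\cong V^{[E:K]}$ is simple over $D_E$.
\item Take $w=u_2\otimes 1-u_1\otimes\alpha$ with $u_1,u_2$ independent. Then $D[x]w\cong V\oplus V$.
\end{enumerate}
The input this requires is not a rootless polynomial in $D[x]$ but a separable splitting field that is not algebraically closed. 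Your second link (Baer's theorem, which you sketch correctly) supplies exactly this in contrapositive form: if a separable maximal subfield $K$ were algebraically closed, every polynomial in $F[x]$ would have a root in $K\subseteq D$, forcing (ii). So the right order is: (ii) fails, hence $K$ is not algebraically closed, hence there is an explicit counterexample in $D[x,y]$. This proves (i)$\Rightarrow$(ii) and the final assertion at once.

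\textbf{A secondary gap: your third link is only asserted.} The \AS{} property quantifies over left ideals of polynomial rings in arbitrarily many variables, so it is not first-order. Tarski's principle therefore does not justify a verbatim transfer of the $\mathbb{R}$-argument. Either check that argument step by step, or argue as the paper does. The algebra $(-1,-1)_F$ is right algebraically closed. The common centralizer of a commuting tuple containing a noncentral $a$ is the algebraically closed field $F(a)$, so $D$ is centrally algebraically closed. Theorem~\ref{thm:nullstellensatz}(b) then yields the \AS{} property.
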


The proof of the above theorem uses a splitting-field obstruction established in Proposition~\ref{prop:obstruction}: if a finite separable splitting field $K$ of a noncommutative central division algebra $D$ is not algebraically closed, then a counterexample to the \AS{} property can be constructed already in $D[x,y]$. The obstruction is expressed intrinsically in terms of annihilators of vectors in simple modules over split central simple algebras. Consequently, every finite separable splitting field of a noncommutative centrally finite \AS{} division ring must be algebraically closed. We then choose a separable maximal subfield $K\subseteq D$. Since such a field is a splitting field, it must be algebraically closed, and hence every nonconstant polynomial in $F[x]$ has a root in $K\subseteq D$. We then apply a theorem of Baer, which states that if every nonconstant polynomial over the center $F$ of a noncommutative centrally finite division ring $D$ has a root in $D$, then $F$ is real closed and $D$ is Hamilton's quaternion algebra over $F$. This gives the classification directly. The equivalence with left and right algebraic closedness then follows from the Niven--Jacobson theorem. For the positive quaternionic case, the centralizer of each noncentral element is an algebraically closed quadratic field. A recent theorem of Aryapoor concerning the central Nullstellensatz, together with the characterization obtained by Chapman and the author, implies the \AS{} property.

The finite-dimensional hypothesis is essential to the present argument, and no comparable classification is currently available for division rings of infinite dimension over their centers. It remains an open question to characterize the noncommutative centrally infinite \AS{} division rings.

This paper is organized as follows. Section~\ref{sec:prelim} reviews the necessary standard results. Section~\ref{sec:obstruction} establishes the mentioned splitting-field obstruction. Section~\ref{sec:quaternion} proves the positive result for Hamilton quaternion algebras over real closed fields. Section~\ref{sec:classification} proves the classification using algebraically closed maximal subfields and Baer's theorem, and concludes with a concrete example over the field of real constructible numbers.

\section{Preliminaries}\label{sec:prelim}

Throughout this paper, all rings are associative and unital, and all polynomial variables are central. If $D$ is a central division algebra over a field $F$ and $K/F$ is a field extension, we write
\[
D_K=D\otimes_F K.
\]
The \emph{degree} of $D$ is the integer $d$ satisfying $[D:F]=d^2$. A central simple algebra $A$ over a field $K$ is called \emph{split} if
\[
A\cong \operatorname{End}_K(V)
\]
for some finite-dimensional $K$-vector space $V$.

\subsection{The \AS{} property}

\begin{definition}
Let $D$ be a division ring. We say that $D$ is an \emph{\AS{} ring} if, for every $n\ge 2$, every $1\le k<n$, and every maximal left ideal
\[
M\subset D[x_1,\ldots,x_n],
\]
the contraction
\[
M\cap D[x_1,\ldots,x_k]
\]
is a maximal left ideal of $D[x_1,\ldots,x_k]$.
\end{definition}

In the above definition, it is equivalent to require the condition only for $k=n-1$ and then apply it repeatedly. 

\subsection{Standard field-theoretic facts}

We shall use the following standard results, see \cite[Sections~12.7, 13.1, 13.2 and 13.5]{Pierce},  theorem; \cite[Chapter~6, Section~5, Theorem~12]{Ramanathan}, \cite[Chapter~2, Section~6]{Ramanathan}.

\begin{theorem}\label{thm:standard}
Let $D$ be a finite-dimensional central division algebra over a field $F$, and write $d=\deg D$.
\begin{enumerate}[label=(\alph*)]
\item There exists a finite separable extension $K/F$ such that $D_K$ is split.
\item The Artin--Schreier Theorem: If an algebraic closure of $F$ has finite degree over $F$, then $F$ is either algebraically closed or real closed. In the latter case its algebraic closure is $F(\sqrt{-1})$ and has degree two over $F$. 
\item The algebra $D$ contains a maximal subfield $K$ which is separable over $F$. Every such $K$ satisfies $[K:F]=d$ and is a splitting field of $D$.
\end{enumerate}
\end{theorem}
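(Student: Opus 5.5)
Theorem~\ref{thm:standard} collects three standard facts, and the plan is to point to where each is proved in the cited sources, sketching the argument where it is short.

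For (a), I will first show that every maximal subfield of $D$ splits $D$, and then produce a separable one; this also gives (c). Let $K\subseteq D$ be a maximal subfield. The double centralizer theorem applies to the simple subalgebra $K$ of $D$. It gives $C_D(K)=K$ and $[K:F]\cdot[C_D(K):F]=[D:F]$, so $[K:F]^2=d^2$ and $[K:F]=d$. Next consider $D$ as a left $D\otimes_F K$-module via $(a\otimes k)\cdot x=axk$. Since $D\otimes_F K$ is central simple over $K$ of dimension $d^2$, this module is faithful. It has $K$-dimension $d$, so $D\otimes_F K\cong \operatorname{End}_K(D)$ by dimension count. Thus $D_K$ is split.

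It remains to find a separable maximal subfield. This is the one nontrivial input, the Koethe--Noether--Jacobson theorem, which I take from Pierce, Section~13.5. If $F$ is finite, Wedderburn's theorem gives $D=F$ and there is nothing to prove. Otherwise, the plan is to show that $D$ contains a noncentral element separable over $F$. Suppose every element were purely inseparable over $F$. Then some power $a^{p^e}$ of each element would be central, and an argument with the derivation $[a,-]$, or with reduced norms, yields a contradiction. One then passes to the centralizer of such an element and inducts on the degree. Taking $K$ to be such a separable maximal subfield proves (a) and (c) simultaneously.

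For (b), I will follow Artin--Schreier as in Ramanathan. Let $C$ be an algebraic closure of $F$ with $1<[C:F]<\infty$.
\begin{itemize}
\item First, $C/F$ is separable. In characteristic $p$, purely inseparable extensions are handled by the surjectivity of Frobenius on $C$. Artin--Schreier extensions $x^p-x-a$ would give infinitely many degree-$p$ layers, which is impossible.
\item Hence $C/F$ is Galois. Choose a subgroup of prime order $q$ in $\mathrm{Gal}(C/F)$, with fixed field $L$, so that $C/L$ is cyclic of degree $q$.
\item If $q$ is the characteristic, this contradicts the Artin--Schreier extension argument above.
\item Otherwise $L$ contains the $q$-th roots of unity, so $C=L(\alpha^{1/q})$. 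A norm computation shows that this forces $q=2$, $\sqrt{-1}\notin L$, and $C=L(i)$.
\end{itemize}
One then shows that $L$ is formally real and that every element of $L$ is $\pm$ a square. Applying the same reasoning to $C/F$ in place of $C/L$ gives $C=F(\sqrt{-1})$ with $F$ real closed.

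The main obstacle is the existence of a separable maximal subfield in characteristic $p$. For that step I will rely on Pierce rather than reprove it.
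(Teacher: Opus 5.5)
Your proposal is correct and takes the same route as the paper, which gives no argument of its own and simply cites Pierce and Ramanathan for these standard facts; your sketches of the splitting argument for maximal subfields, the Koethe--Noether--Jacobson reduction and the Kummer/norm step of Artin--Schreier are all sound. One small point: in (b) you cannot apply the prime-degree argument to $C/F$ until you know $[C:F]=2$, which follows because $\sqrt{-1}\notin L\supseteq F$ and any element of prime order in $\mathrm{Gal}(C/F(\sqrt{-1}))$ would have a fixed field containing $\sqrt{-1}$, contradicting what you proved for $L$, so $C=F(\sqrt{-1})$ by Cauchy's theorem.
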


\subsection{One-sided algebraic closedness}

Let $D$ be a division ring and let
\[
p(x)=a_0+a_1x+\cdots+a_mx^m\in D[x].
\]
For $r\in D$, define the right and left values of $p$ at $r$ by
\[
p_r(r)=a_0+a_1r+\cdots+a_mr^m,
\qquad
p_\ell(r)=a_0+ra_1+\cdots+r^m a_m.
\]
The element $r$ is called a \emph{right root} or a \emph{left root} according as $p_r(r)=0$ or $p_\ell(r)=0$. The division ring $D$ is called \emph{right algebraically closed} or \emph{left algebraically closed} if every nonconstant polynomial in $D[x]$ has a right root or a left root, respectively. The factor theorem identifies right roots with right linear factors and left roots with left linear factors; see \cite[Proposition~16.2]{Lam}.

We shall use the following classical classification.

\begin{theorem}[Niven--Jacobson--Baer]\label{thm:NJB}
Let $D$ be a noncommutative division ring of finite dimension over its center $F$. The following conditions are equivalent:
\begin{enumerate}
\item every nonconstant polynomial in $F[x]$ has a root in $D$;
\item $D$ is right algebraically closed;
\item $D$ is left algebraically closed;
\item $F$ is real closed and
\[
D\cong(-1,-1)_F.
\]
\end{enumerate}
\end{theorem}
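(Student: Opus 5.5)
We prove the cycle (iv)$\Rightarrow$(ii)$\Rightarrow$(i)$\Rightarrow$(iv), and obtain (iii) from (ii) by passing to the opposite ring.

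\textbf{(iv)$\Rightarrow$(ii).} Let $C=F(\sqrt{-1})$. Since $F$ is real closed, $C$ is algebraically closed by Theorem~\ref{thm:standard}(b). Write $D=C\oplus Cj$ with $j^2=-1$ and $jz=\bar z j$ for $z\in C$, and let $p\in D[x]$ be nonconstant. The conjugate polynomial $\bar p$ is obtained by applying the standard involution to the coefficients. The product $N(p)=p\bar p$ has coefficients that are fixed by the involution, so $N(p)\in F[x]$, and $\deg N(p)=2\deg p$. Because $x$ is central, $N(p)$ factors over $C$ into linear factors $x-c$. The usual argument (Niven's) applies: take a root $c\in C$ of $N(p)$. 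If $c$ is central, it is a root of the norm polynomial. Otherwise, by the Gordon--Motzkin theory, some conjugate of $c$ is a right root of $p$. Concretely, $N(p)$ lies in the left ideal $D[x]p$, so the minimal polynomial $(x-c)(x-\bar c)\in F[x]$ of $c$ divides $N(p)$. Therefore either $p$ or $\bar p$ has a right root in the conjugacy class of $c$. If only $\bar p$ does, we transfer this to $p$ using the relation $\overline{p_r(r)}=\bar p_\ell(\bar r)$ together with the fact that a quadratic central factor meeting $D[x]p$ gives a right root. This is essentially \cite[Section~16]{Lam}.

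\textbf{(ii)$\Rightarrow$(i).} This is immediate: take $p\in F[x]\subseteq D[x]$, and any right root of $p$ is a root.

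\textbf{(i)$\Rightarrow$(iv).} This is Baer's theorem and is the heart of the argument. Let $p\in F[x]$ be irreducible of degree $m$. A root $r\in D$ of $p$ generates a subfield $F(r)$ of degree $m$ with $m\mid d$. Hence every finite extension of $F$ has degree dividing $d$, which bounds the degrees of irreducible polynomials. Taking compositums of finite separable extensions, any finite Galois extension embeds into a maximal subfield, so its degree is at most $d$. An algebraic closure of $F$ therefore has finite degree over $F$ (bounded separable degree; the inseparable part is handled the same way, and it cannot occur since the bound would also be violated by purely inseparable towers). By Artin--Schreier (Theorem~\ref{thm:standard}(b)), $F$ is real closed, since $D\neq F$ forces $F$ not to be algebraically closed. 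Then every maximal subfield $K$ of $D$ is $F$-isomorphic to $C=F(\sqrt{-1})$, so $d=2$. A quaternion algebra over a real closed field that is a division algebra must be $(-1,-1)_F$: the quadratic form $\langle a,b,-ab\rangle$ is anisotropic only when it is definite, and $F^{\times}/F^{\times2}=\{\pm1\}$.

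\textbf{(ii)$\Leftrightarrow$(iii).} Apply (ii) to $D^{\mathrm{op}}$, which has the same center and satisfies (iv) whenever $D$ does, since $(-1,-1)_F^{\mathrm{op}}\cong(-1,-1)_F$. A left root of $p$ in $D$ is the same as a right root of $p$ viewed in $D^{\mathrm{op}}$.

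\textbf{Main obstacle.} The delicate step is the passage in (iv)$\Rightarrow$(ii) from a root of the norm polynomial to an actual right root of $p$. I would handle it by the degree-induction factor argument: split $N(p)$ into central quadratic factors, and use that $D[x]$ is a left PID to extract a linear right factor of $p$.
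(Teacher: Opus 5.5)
Your overall logic matches the paper's: (iv)$\Rightarrow$(ii)$\Rightarrow$(i)$\Rightarrow$(iv), with (iii) obtained through $D^{\mathrm{op}}$, and (iii)$\Rightarrow$(i) trivially. The paper itself only cites Niven--Jacobson and Baer for these implications.

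\textbf{The gap in (iv)$\Rightarrow$(ii).} One step there does not work as written. The claim that ``either $p$ or $\bar p$ has a right root in the conjugacy class of $c$'' does not follow from the fact that the minimal polynomial of $c$ divides $N(p)$. Your proposed transfer also goes the wrong way. Apply the identity $\overline{p_r(r)}=\bar p_\ell(\bar r)$ to a right root $s$ of $\bar p$: it only shows that $\bar s$ is a \emph{left} root of $p$, which is not what (ii) asks for. The ``fact that a quadratic central factor meeting $D[x]p$ gives a right root'' is exactly the content of this step, and it is asserted rather than proved.

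\textbf{How to repair it.} The missing tool is the product rule for right evaluation. If $f,g\in D[x]$, $c\in D$ and $b=g_r(c)\neq0$, then $(fg)_r(c)=f_r(bcb^{-1})\,b$; to see this, expand and use $bc^i=(bcb^{-1})^ib$. Now take a root $c\in C$ of $p\bar p\in F[x]$.
\begin{enumerate}
\item If $b=\bar p_r(c)\neq0$, then $0=(p\bar p)_r(c)=p_r(bcb^{-1})\,b$, so $bcb^{-1}$ is a right root of $p$.
\item If $\bar p_r(c)=0$, then $\bar p=h\,(x-c)$ by the factor theorem. Since conjugation is an anti-automorphism of $D[x]$ fixing $x$, this gives $p=(x-\bar c)\,\bar h$. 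Induct on $\deg p$, the degree-one case being trivial: a right root of $\bar h$ is then a right root of $p$.
\end{enumerate}
This handles central and noncentral $c$ uniformly and makes the Gordon--Motzkin and left-PID discussion unnecessary.

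\textbf{Comparison for (i)$\Rightarrow$(iv).} Here your route genuinely differs from the paper's, which takes this implication from Baer's argument via Niven's remark that only central coefficients are used. Your argument is correct and self-contained, but the steps should be ordered as follows.
\begin{enumerate}
\item Hypothesis (i) at first bounds only simple extensions: $F(r)$ has degree $m\mid d$.
\item If $\operatorname{char}F=\ell>0$ and $a\notin F^{\ell}$, the irreducible polynomials $x^{\ell^n}-a$ violate this bound, so $F$ is perfect.
\item Only then does every finite extension have degree dividing $d$, which gives $[\overline F:F]\le d$.
\item Artin--Schreier, $[K:F]=d$ and $F^{\times}/F^{\times2}=\{\pm1\}$ then finish.
\end{enumerate}
What this buys is independence from the historical point about Baer's hypotheses. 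The same shortcut applies to the paper's proof of Theorem~\ref{thm:main}. There, Corollary~\ref{cor:splitting-closed} already yields an algebraically closed maximal subfield $K$ with $[K:F]=d$. Artin--Schreier then gives directly that $F$ is real closed and $d=2$, so Baer's theorem could be bypassed there as well.
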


The equivalence of (ii), (iii), and (iv) is recorded by Niven in \cite[Theorem~1]{Niven}; see also \cite[Theorems~16.14 and 16.15]{Lam}. Niven's theorem is stated under the hypothesis that every equation of the indicated polynomial form with coefficients in $D$ has a solution in $D$. Immediately after the statement, however, Niven observes that Baer's proof of the necessity direction uses only equations whose coefficients lie in the center $F$; see also the proof in \cite[Section~6]{Niven}. After dividing by the leading coefficient, condition~(i) is exactly this central-coefficient hypothesis. Thus Baer's theorem, as presented by Niven, gives (i)$\Rightarrow$(iv) directly. The implications (ii)$\Rightarrow$(i) and (iii)$\Rightarrow$(i) are immediate, while (iv)$\Rightarrow$(ii) and (iv)$\Rightarrow$(iii) are the Niven--Jacobson theorem.

\subsection{The central Nullstellensatz}

Following Aryapoor \cite{Aryapoor}, a division ring $D$ is called \emph{centrally algebraically closed} if the centralizer
\[
\cent_D(a_1,\ldots,a_r)
\]
is right algebraically closed for every finite tuple of pairwise commuting elements $a_1,\ldots,a_r\in D$. Following \cite{ChapmanAuthor}, we call $D$ a \emph{Nullstellensatz ring} if, for every $n\ge 1$, the maximal left ideals of $D[x_1,\ldots,x_n]$ are precisely the ideals
\[
D[x_1,\ldots,x_n](x_1-a_1)+\cdots+D[x_1,\ldots,x_n](x_n-a_n)
\]
with $(a_1,\ldots,a_n)$ pairwise commuting.

\begin{theorem}\label{thm:nullstellensatz}
Let $D$ be a division ring.
\begin{enumerate}[label=(\alph*)]
\item $D$ is a Nullstellensatz ring if and only if $D$ is centrally algebraically closed.
\item If $D$ is right algebraically closed, then
\[
\begin{aligned}
D\text{ is \AS{}}
&\Longleftrightarrow D\text{ is a Nullstellensatz ring}\\
&\Longleftrightarrow D\text{ is centrally algebraically closed}.
\end{aligned}
\]
\end{enumerate}
\end{theorem}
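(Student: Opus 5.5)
Both parts of Theorem~\ref{thm:nullstellensatz} are results from the literature: part~(a) is Aryapoor's central Nullstellensatz \cite{Aryapoor}, and part~(b) combines it with the characterization in \cite{ChapmanAuthor}. I would therefore present the proof mainly as a reduction to those sources, checking only the elementary links between the three conditions in~(b).

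For (a), in the direction ``Nullstellensatz $\Rightarrow$ centrally algebraically closed'', fix pairwise commuting $a_1,\ldots,a_r$ and a nonconstant $p\in\cent_D(a_1,\ldots,a_r)[t]$. Consider the left ideal
\[
I=\sum_i D[x_1,\ldots,x_r,t](x_i-a_i)+D[x_1,\ldots,x_r,t]\,p(t).
\]
\begin{itemize}
\item I would first show $I$ is proper. The plan is to reduce modulo the $x_i-a_i$ and use that the coefficients of $p$ commute with the $a_i$.
\item Then take a maximal left ideal containing $I$. By the Nullstellensatz hypothesis it has the form $\sum_i(x_i-a_i')+(t-b)$ with commuting entries.
\item Comparing the generators should force $a_i'=a_i$, so that $b\in\cent_D(a_1,\ldots,a_r)$ and $p(b)=0$.
\end{itemize}
The converse direction is the substantial content of Aryapoor's theorem, and I would cite it. A proof would go by induction on $n$, using the finite-dimensionality of $D[x_1,\ldots,x_n]/M$ from \cite{AmitsurSmall} and the algebraic closedness of iterated centralizers to extract a common root.

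For (b), I would first show that Nullstellensatz implies \AS{}. If $M=\sum_{i\le n}D[\underline x](x_i-a_i)$ with the $a_i$ commuting, then $M$ is the kernel of evaluation at $(a_1,\ldots,a_n)$. Hence $M\cap D[x_1,\ldots,x_k]$ is the kernel of evaluation at $(a_1,\ldots,a_k)$, namely $\sum_{i\le k}(x_i-a_i)$. The quotient by this ideal is the simple module $D$, so the contraction is maximal.

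For the converse, assume $D$ is \AS{} and right algebraically closed. I would show by induction on $n$ that every maximal left ideal $M\subseteq D[x_1,\ldots,x_n]$ has the stated form.
\begin{itemize}
\item For $n=1$, maximal left ideals of $D[x]$ are generated by irreducible polynomials. Right algebraic closedness makes these linear, so they are of the form $D[x](x-a)$.
\item For general $n$, the \AS{} property makes $M\cap D[x_1,\ldots,x_{n-1}]$ maximal. By induction it is generated by $x_i-a_i$ for $i<n$, with the $a_i$ commuting.
\item Hence $D[\underline x]/M$ is a quotient of $D[x_n]$, twisted by the requirement that $x_n$ commutes with the action of the $a_i$.
\end{itemize}
Producing the last coordinate $a_n\in\cent_D(a_1,\ldots,a_{n-1})$ is the main obstacle. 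It amounts to showing that the centralizer is right algebraically closed, and this is where I would invoke \cite{ChapmanAuthor}, whose characterization supplies exactly this central algebraic closedness from the \AS{} property. Part~(a) then turns central algebraic closedness into the Nullstellensatz property, closing the cycle of equivalences.
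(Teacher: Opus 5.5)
Your proposal matches the paper, which also proves this theorem purely by citation: part~(a) is Aryapoor's central Nullstellensatz, and part~(b) is \cite[Theorem~4.2]{ChapmanAuthor} (\AS{} $\Leftrightarrow$ Nullstellensatz for right algebraically closed $D$) combined with~(a); the implications you check by hand (Nullstellensatz $\Rightarrow$ centrally algebraically closed, Nullstellensatz $\Rightarrow$ \AS{} via evaluation kernels, and the case $n=1$) are all correct. In fact your ``main obstacle'' has a short direct proof: if $M$ is a maximal left ideal of $D[x_1,\ldots,x_r,t]$ containing your ideal $I$, then the \AS{} property (after permuting variables) and right algebraic closedness give $M\cap D[t]=D[t](t-b)$, so $p(b)=0$, and $(x_i-a_i)(t-b)-(t-b)(x_i-a_i)=a_ib-ba_i\in M$ is a constant in a proper left ideal, which forces $b\in\cent_D(a_1,\ldots,a_r)$.
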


Part (a) is the main result of \cite[Section~4]{Aryapoor}. For right algebraically closed division rings, the equivalence between the \AS{} and Nullstellensatz properties is \cite[Theorem~4.2]{ChapmanAuthor}; Aryapoor also gives the resulting three-way equivalence in \cite[Section~4]{Aryapoor}.

\subsection{Hamilton quaternion algebras}

Let $F$ be a field of characteristic different from $2$, and put
\[
H_F=(-1,-1)_F=F+Fi+Fj+Fk,
\]
where
\[
i^2=j^2=-1,
\qquad ij=-ji=k.
\]
A nonzero element of a quaternion algebra is invertible if and only if its reduced norm is nonzero; see \cite[Lemma~3.3.6]{Voight}. If $F$ is real closed, the reduced norm of $H_F$ is
\[
N(a+bi+cj+dk)=a^2+b^2+c^2+d^2,
\]
which is anisotropic over $F$. Hence $H_F$ is a division algebra.

\section{The splitting-field obstruction}\label{sec:obstruction}

We begin with the following general observation.

\begin{lemma}\label{lem:pullback}
Let $D$ be a central division algebra over $F$, and let
\[
F\subseteq K=F(\theta)\subseteq E=K(\alpha)
\]
be a tower of finite field extensions. Consider the epimorphisms
\[
\Phi:D[x,y]\longrightarrow D_E,
\qquad x\longmapsto\theta,
\qquad y\longmapsto\alpha,
\]
and
\[
\Psi:D[x]\longrightarrow D_K,
\qquad x\longmapsto\theta.
\]
Let $N$ be a left ideal of $D_E$, and put $M=\Phi^{-1}(N)$. Then
\[
M\cap D[x]=\Psi^{-1}(N\cap D_K).
\]
Consequently, if $N$ is maximal in $D_E$ and $N\cap D_K$ is not maximal in $D_K$, then $M$ is maximal in $D[x,y]$ and $M\cap D[x]$ is not maximal in $D[x]$.
\end{lemma}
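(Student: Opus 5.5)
The plan is to compare the two homomorphisms through the inclusion $D_K\subseteq D_E$. First I will record that $\Phi$ and $\Psi$ are well defined and surjective. Since $x,y$ are central, the $D$-algebra map $D[x,y]=D\otimes_F F[x,y]\to D\otimes_F E$ is induced by the $F$-algebra map $F[x,y]\to E$, $x\mapsto\theta$, $y\mapsto\alpha$. That map is onto because $E=F(\theta,\alpha)=F[\theta,\alpha]$, the extensions being finite. Tensoring with $D$ over $F$ preserves surjectivity, so $\Phi$ is onto, and $\Psi$ is onto for the same reason. Moreover, $D_K=D\otimes_F K$ embeds in $D_E=D\otimes_F E$, because $K\to E$ is injective and $D$ is flat over the field $F$. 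Under this embedding, $\Phi|_{D[x]}=\iota\circ\Psi$, where $\iota:D_K\hookrightarrow D_E$ is the inclusion; both maps send $x\mapsto\theta$ and fix $D$.

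The identity is then a direct element chase. For $f\in D[x]$, we have $f\in M$ iff $\Phi(f)\in N$. Since $\Phi(f)=\Psi(f)\in D_K$, this holds iff $\Psi(f)\in N\cap D_K$. Hence $M\cap D[x]=\Psi^{-1}(N\cap D_K)$. One checks that $N\cap D_K$ is a left ideal of $D_K$, since $N$ is a left ideal of $D_E\supseteq D_K$.

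For the consequence, I will use the correspondence theorem for left ideals under a surjective ring homomorphism. If $\pi:R\to S$ is onto, then $L\mapsto\pi^{-1}(L)$ is an inclusion-preserving bijection from left ideals of $S$ onto left ideals of $R$ containing $\ker\pi$. Proper ideals correspond to proper ideals, so maximal ideals correspond to maximal ideals. Every left ideal containing $\pi^{-1}(L)$ contains $\ker\pi$, so $\pi^{-1}(L)$ is maximal in $R$ iff $L$ is maximal in $S$. Applying this to $\Phi$ shows that $M$ is maximal in $D[x,y]$ when $N$ is maximal. Applying it to $\Psi$ shows that $M\cap D[x]=\Psi^{-1}(N\cap D_K)$ is maximal iff $N\cap D_K$ is maximal. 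Non-maximality of $N\cap D_K$ therefore transfers to $M\cap D[x]$. This includes the degenerate case $N\cap D_K=D_K$, where the preimage is all of $D[x]$, which is also not maximal.

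I expect no real obstacle here. The only points needing care are the surjectivity of $\Phi$ and $\Psi$, which rests on the finiteness of the extensions, and the injectivity of $D_K\to D_E$, which makes the intersection $N\cap D_K$ meaningful.
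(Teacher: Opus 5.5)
Your proposal is correct and follows essentially the same route as the paper: surjectivity from $E=F[\theta,\alpha]$, the factorization $\Phi|_{D[x]}=\iota\circ\Psi$ giving the element chase, and then transfer of maximality along the surjections $\Phi$ and $\Psi$. The only difference is in phrasing. You invoke the correspondence theorem for left ideals, while the paper expresses the same step as simplicity of the isomorphic quotients $D[x]/(M\cap D[x])\cong D_K/(N\cap D_K)$. Your flatness remark justifying $D_K\subseteq D_E$ is a detail the paper leaves implicit.
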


\begin{proof}
The maps are surjective because $K=F(\theta)$ and $E=K(\alpha)$, and the variables are mapped to central elements. Moreover, the restriction of $\Phi$ to $D[x]$ is the composition of $\Psi$ with the natural inclusion $D_K\subseteq D_E$. Thus, for $f\in D[x]$,
\[
f\in M\Longleftrightarrow \Psi(f)\in N\cap D_K,
\]
which proves the equality.

If $N$ is maximal, then $M$ is maximal because $\Phi$ is surjective. Finally,
\[
D[x]/(M\cap D[x])\cong D_K/(N\cap D_K)
\]
as left $D[x]$-modules, where the $D[x]$-module structure on the right is induced by the surjection $\Psi:D[x]\to D_K$. Hence the $D[x]$-submodules of $D_K/(N\cap D_K)$ are exactly its $D_K$-submodules, and therefore
\[
\begin{aligned}
M\cap D[x]\text{ is maximal in }D[x]
&\Longleftrightarrow D[x]/(M\cap D[x])\text{ is simple}\\
&\hspace{2.4em}\text{as a left }D[x]\text{-module}\\
&\Longleftrightarrow D_K/(N\cap D_K)\text{ is a simple left }D_K\text{-module}\\
&\Longleftrightarrow N\cap D_K\text{ is maximal in }D_K.
\end{aligned}
\]
The last equivalence proves the asserted nonmaximality of $M\cap D[x]$.
\end{proof}

\begin{lemma}\label{lem:annihilator}
Let $A$ be a split central simple algebra of degree $d\geq 2$ over a field $K$, let $V$ be a simple left $A$-module, and let $E=K(\alpha)$ be a proper finite extension. Choose linearly independent elements $u_1,u_2\in V$, put
\[
A_E=A\otimes_K E,
\qquad
V_E=V\otimes_K E,
\qquad
w=u_2\otimes 1-u_1\otimes\alpha,
\]
and let
\[
N=\operatorname{Ann}_{A_E}(w)=\{a\in A_E:aw=0\}.
\]
Then $N$ is a maximal left ideal of $A_E$, and
\[
N\cap A=\operatorname{Ann}_A(u_1,u_2)
=\{a\in A:au_1=au_2=0\}.
\]
Moreover,
\[
A/(N\cap A)\cong V\oplus V
\]
as left $A$-modules. In particular, $N\cap A$ is not maximal and the quotient has composition length two. Finally, $N=A_E\pi$ for an idempotent $\pi\in A_E$.
\end{lemma}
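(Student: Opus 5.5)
We identify $A$ with $\operatorname{End}_K(V)$, where $\dim_K V=d$ and $V$ is the unique simple left $A$-module up to isomorphism. Then $A_E\cong\operatorname{End}_E(V_E)$ and $V_E$ is the simple left $A_E$-module. The plan is to compute both annihilators with two standard facts about a full endomorphism algebra $\operatorname{End}(W)$ over a field:
\begin{itemize}[label=$\cdot$]
\item for a nonzero vector $w$, the map $a\mapsto aw$ is a surjection onto $W$;
\item for a subspace $U$, the left ideal $\operatorname{Ann}(U)$ satisfies $\operatorname{End}(W)/\operatorname{Ann}(U)\cong\operatorname{Hom}(U,W)\cong W^{\dim U}$.
\end{itemize}

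\textbf{Step 1: maximality of $N$.} Choose a basis $\{1,\alpha,\dots\}$ of $E/K$, which is possible since $E=K(\alpha)$. Because $u_1,u_2$ are linearly independent and $1,\alpha$ are $K$-linearly independent (as $E\ne K$), the element $w=u_2\otimes1-u_1\otimes\alpha$ is nonzero in $V_E$. The map $A_E\to V_E$, $a\mapsto aw$, is then a surjective homomorphism of left $A_E$-modules. Its kernel is $N$, so $A_E/N\cong V_E$ is simple and $N$ is maximal.

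For the idempotent, extend $w$ to an $E$-basis of $V_E$. Let $\pi$ be the projection that kills $w$ and fixes the other basis vectors. Then $N=\{a: a\pi=a\}=A_E\pi$, since $a=a\pi+a(1-\pi)$ and $a(1-\pi)$ vanishes exactly when $aw=0$.

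\textbf{Step 2: the contraction (the main point).} Let $a\in A$, viewed as $a\otimes1$. Then
\[
aw=au_2\otimes1-au_1\otimes\alpha .
\]
Decompose $V_E=\bigoplus_j V\otimes e_j$ along the chosen $K$-basis of $E$, with $e_1=1$ and $e_2=\alpha$. The vector $aw$ is zero if and only if its components $au_2$ and $-au_1$ are both zero. Hence
\[
N\cap A=\operatorname{Ann}_A(u_1,u_2).
\]
This is the only place where properness of $E/K$ is used: it guarantees that $1$ and $\alpha$ are independent. I expect the care needed here to be the main obstacle, namely justifying the tensor decomposition cleanly. In substance, however, the step is routine.

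\textbf{Step 3: the quotient.} Consider the map
\[
A\to V\oplus V,\qquad a\mapsto(au_1,au_2).
\]
It is surjective because $u_1,u_2$ are independent: they extend to a basis of $V$, and any linear map may prescribe their images. Its kernel is $\operatorname{Ann}_A(u_1,u_2)$, so
\[
A/(N\cap A)\cong V\oplus V .
\]
This module has composition length two, so it is not simple, and therefore $N\cap A$ is not maximal. Note that $d\ge2$ is what permits the choice of two independent vectors.
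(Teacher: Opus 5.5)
Your proposal is correct and follows the paper's proof essentially step for step: the identification $A\cong\operatorname{End}_K(V)$, the surjection $a\mapsto aw$ onto $V_E$ with kernel $N$, the $K$-independence of $1$ and $\alpha$ to compute $N\cap A$, the surjection $a\mapsto(au_1,au_2)$ onto $V\oplus V$, and the projection $\pi$ killing $w$ to obtain $N=A_E\pi$. The only cosmetic difference is that you justify surjectivity of $a\mapsto aw$ by a direct fact about full endomorphism algebras, whereas the paper uses simplicity of $V_E$; this changes nothing.
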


\begin{proof}
The left $A$-module structure on $V$ defines a $K$-algebra homomorphism
\[
A\longrightarrow\operatorname{End}_K(V),
\qquad
 a\longmapsto(v\longmapsto av).
\]
Since $A$ is split and $V$ is simple, the Wedderburn--Artin theorem identifies this homomorphism with an isomorphism
\[
A\cong\operatorname{End}_K(V).
\]
After extending scalars from $K$ to $E$, the induced left $A_E$-module structure on $V_E$ similarly gives
\[
A_E\cong\operatorname{End}_E(V_E),
\]
and $V_E$ is a simple left $A_E$-module. These standard facts about split simple algebras and their simple modules are contained in \cite[Theorem~3.3]{Lam}; see also \cite[Chapter~1, Sections~2--3]{Lam} for the Wedderburn--Artin theory.

The vector $w$ is nonzero, since the linearly independent vectors $u_1,u_2$ remain linearly independent after extending scalars. Give $A_E$ its left regular module structure and $V_E$ the left $A_E$-module structure just described. Then the evaluation map
\[
A_E\longrightarrow V_E,
\qquad a\longmapsto aw,
\]
is a nonzero homomorphism of left $A_E$-modules. Its image is a nonzero submodule of the simple module $V_E$, and is therefore all of $V_E$. Its kernel is $N$, so $N$ is maximal.

For $a\in A$, one has
\[
aw=au_2\otimes 1-au_1\otimes\alpha.
\]
Since $1$ and $\alpha$ are linearly independent over $K$, this vanishes if and only if $au_1=au_2=0$. Hence
\[
N\cap A=\operatorname{Ann}_A(u_1,u_2).
\]
The map
\[
A\longrightarrow V\oplus V,
\qquad a\longmapsto(au_1,au_2),
\]
is surjective: prescribed images of the linearly independent vectors $u_1,u_2$ extend to a $K$-linear endomorphism of $V$, and the isomorphism $A\cong\operatorname{End}_K(V)$ shows that every such endomorphism is induced by an element of $A$. Its kernel is $N\cap A$, which gives the asserted isomorphism. Since $V$ is simple, the quotient has composition length two.

Choose a decomposition $V_E=Ew\oplus U$, and let $\pi$ be the projection onto $U$ along $Ew$. Then $\pi^2=\pi$ and $\pi(w)=0$. An endomorphism $a$ of $V_E$ annihilates $w$ if and only if $a=a\pi$. Therefore
\[
N=A_E\pi.
\]
\end{proof}

\begin{proposition}[Splitting-field obstruction]\label{prop:obstruction}
Let $D$ be a central division algebra of degree $d>1$ over $F$, and let $K/F$ be a finite separable splitting field of $D$. If $K$ is not algebraically closed, then $D$ is not an \AS{} ring. Moreover, a counterexample may be chosen in $D[x,y]$.
\end{proposition}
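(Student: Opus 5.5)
The plan is to combine Lemma~\ref{lem:pullback} with Lemma~\ref{lem:annihilator}, applied to $A=D_K$. First, since $K/F$ is finite separable, the primitive element theorem gives $K=F(\theta)$ for some $\theta$. Since $K$ is not algebraically closed, there is a proper finite extension $E/K$. I would take a simple extension $E=K(\alpha)$; for instance, let $\alpha$ be a root of an irreducible polynomial over $K$ of degree at least two. Then $F\subseteq K=F(\theta)\subseteq E=K(\alpha)$ is a tower of finite field extensions, exactly as required in Lemma~\ref{lem:pullback}.

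Next, put $A=D_K$. This is a central simple $K$-algebra of degree $d\ge 2$, and it is split because $K$ is a splitting field of $D$. Let $V$ be a simple left $A$-module. It has $K$-dimension $d\ge2$, so we can choose linearly independent $u_1,u_2\in V$. Note that $A_E=D_K\otimes_K E\cong D\otimes_F E=D_E$, and under this identification $A\subseteq A_E$ corresponds to the natural inclusion $D_K\subseteq D_E$. Lemma~\ref{lem:annihilator} then produces a maximal left ideal $N=\operatorname{Ann}_{A_E}(w)$ of $D_E$ such that $N\cap D_K$ is not maximal in $D_K$, since $D_K/(N\cap D_K)\cong V\oplus V$ has length two.

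Finally, apply the consequence part of Lemma~\ref{lem:pullback}. The ideal $M=\Phi^{-1}(N)$ is a maximal left ideal of $D[x,y]$, and $M\cap D[x]=\Psi^{-1}(N\cap D_K)$ is not maximal in $D[x]$. This is the required counterexample with $n=2$, $k=1$, so $D$ is not an \AS{} ring.

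The only point needing care is the compatibility check. The identification $A_E\cong D_E$ must be an isomorphism of $E$-algebras that restricts to the inclusion $D_K\hookrightarrow D_E$ used by $\Phi$ and $\Psi$. This holds by associativity of tensor products, and it is the step I would verify explicitly. Everything else is a direct citation of the two lemmas.
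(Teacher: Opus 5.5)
Your proposal is correct and follows the paper's proof essentially step for step: write $K=F(\theta)$ by the primitive element theorem, take a proper simple extension $E=K(\alpha)$, and apply Lemma~\ref{lem:annihilator} to $A=D_K$ using two independent vectors in a simple module. Then identify $A_E\cong D_E$ and conclude with Lemma~\ref{lem:pullback}. The compatibility check you flag, that $A_E\cong D_E$ restricts to the inclusion $D_K\subseteq D_E$, is exactly what the paper uses implicitly under the phrase ``natural identification.''
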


\begin{proof}
By the primitive element theorem, we may write $K=F(\theta)$. Since $K$ is not algebraically closed, there is a proper finite extension $E=K(\alpha)$. Put $A=D_K$, and choose a simple left $A$-module $V$. Since $A$ is split of degree $d>1$, the $K$-space $V$ has dimension $d$, so it contains linearly independent elements $u_1,u_2$.

Apply Lemma~\ref{lem:annihilator} to $A$, $V$ and $E/K$. Under the natural identification $A_E\cong D_E$, it gives a maximal left ideal $N$ of $D_E$ such that $N\cap D_K$ is not maximal in $D_K$. Lemma~\ref{lem:pullback} now gives a maximal left ideal $M$ of $D[x,y]$ whose contraction to $D[x]$ is not maximal.
\end{proof}

\begin{remark}\label{rem:generators}
The construction may also be described by generators. Let $p(X)\in F[X]$ be the minimal polynomial of $\theta$. Let $q(Y)\in K[Y]$ be the minimal polynomial of $\alpha$, and choose a polynomial $\widetilde q(X,Y)\in F[X,Y]$ whose image in
\[
F[X,Y]/(p(X))\cong K[Y]
\]
is $q(Y)$. The homomorphism $F[X,Y]\to E$ given by $X\mapsto\theta$ and $Y\mapsto\alpha$ has kernel $(p(X),\widetilde q(X,Y))$: after quotienting by $p(X)$, it becomes the map $K[Y]\to E$ with kernel $(q(Y))$. Tensoring the resulting exact sequence over $F$ with the $F$-vector space $D$, and using $D[x,y]\cong D\otimes_F F[X,Y]$ and $D_E=D\otimes_F E$, gives
\[
\ker\Phi=D[x,y]p(x)+D[x,y]\widetilde q(x,y).
\]

The ideal $N$ in the proof of Proposition~\ref{prop:obstruction} is principal by the last assertion of Lemma~\ref{lem:annihilator}. Write
\[
N=D_E\pi
\]
for the idempotent $\pi$ supplied there, and choose $g\in D[x,y]$ with $\Phi(g)=\pi$. Since $\Phi$ is surjective,
\[
\Phi^{-1}(N)=D[x,y]g+\ker\Phi.
\]
Consequently, the maximal left ideal constructed above is
\begin{equation}\label{eq:threegen}
M=D[x,y]g+D[x,y]p(x)+D[x,y]\widetilde q(x,y).
\end{equation}
Thus the counterexample can always be taken to be a three-generated left ideal.
\end{remark}

\begin{corollary}\label{cor:splitting-closed}
Let $D$ be a noncommutative centrally finite \AS{} division ring. Then every finite separable splitting field of $D$ is algebraically closed. In particular, the center of $D$ has a finite algebraic closure.
\end{corollary}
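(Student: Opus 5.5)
The first assertion is the contrapositive of Proposition~\ref{prop:obstruction}. Let $D$ be noncommutative, centrally finite and \AS{}, with center $F$, so $d=\deg D>1$. Let $K/F$ be a finite separable splitting field of $D$. If $K$ were not algebraically closed, Proposition~\ref{prop:obstruction} would give a maximal left ideal of $D[x,y]$ whose contraction to $D[x]$ is not maximal, contradicting the \AS{} property. Hence every finite separable splitting field of $D$ is algebraically closed.

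For the second assertion, I need at least one finite separable splitting field. Theorem~\ref{thm:standard}(a) supplies a finite separable extension $K/F$ such that $D_K$ is split. Alternatively, Theorem~\ref{thm:standard}(c) gives a separable maximal subfield $K\subseteq D$ with $[K:F]=d$ that splits $D$. By the first part, $K$ is algebraically closed.

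Since $K$ is algebraic over $F$, it is an algebraic closure of $F$. Indeed, every element of $\overline F$ is algebraic over $F$, hence over $K$, and so it lies in $K$. Therefore $[\overline F:F]=[K:F]<\infty$, which shows that the center has a finite algebraic closure.

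There is no real obstacle here. The only point to check is that the splitting field used really is finite and separable over $F$, so that Proposition~\ref{prop:obstruction} applies; Theorem~\ref{thm:standard} guarantees this. One could add that the Artin--Schreier theorem (Theorem~\ref{thm:standard}(b)) then forces $F$ to be real closed, because $F$ cannot itself be algebraically closed when $d>1$. That remark is not needed for the corollary as stated.
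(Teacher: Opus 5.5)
Your proposal is correct and follows essentially the paper's own argument. You take the first claim as the contrapositive of Proposition~\ref{prop:obstruction}, then use Theorem~\ref{thm:standard}(a) to obtain a finite separable splitting field, which is algebraically closed and algebraic over $F$, hence a finite-degree algebraic closure of the center; your closing Artin--Schreier remark is also correct but, as you note, not needed.
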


\begin{proof}
The first claim follows from Proposition~\ref{prop:obstruction}. A finite separable splitting field exists by Theorem~\ref{thm:standard}(a), and this field is algebraic over the center. If it is algebraically closed, it is an algebraic closure of the center.
\end{proof}

\section{Hamilton quaternion algebras over real closed fields}\label{sec:quaternion}

The central quaternionic Nullstellensatz for Hamilton's real quaternion algebra was proved by Alon and the author in \cite{AlonAuthor}. Here the positive direction follows from Aryapoor's general characterization. We record two centralizer lemmas needed for the application.

\begin{lemma}\label{lem:centralizer-one}
Let $F$ be real closed, let $D=(-1,-1)_F$, and let $a\in D\setminus F$. Then
\[
\cent_D(a)=F(a),
\]
and this field is algebraically closed.
\end{lemma}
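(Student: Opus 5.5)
We aim to show two things: that the centralizer of $a$ is exactly $F(a)$, and that $F(a)$ is a quadratic extension of $F$, hence algebraically closed by the Artin--Schreier theorem (Theorem~\ref{thm:standard}(b)).

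\emph{Step 1: $F(a)$ is a quadratic field.} Since $a\in D\setminus F$ and every element of a quaternion algebra satisfies its reduced characteristic polynomial $X^2-\operatorname{Trd}(a)X+N(a)\in F[X]$, the element $a$ is algebraic of degree exactly $2$ over $F$. Because $D$ is a division algebra, $F[a]$ is a commutative domain of finite dimension over $F$, hence a field with $[F(a):F]=2$. Concretely, write $a=a_0+v$ with $a_0\in F$ and $v$ a nonzero pure quaternion. Then $v^2=-(b^2+c^2+d^2)$, and this is a negative element of $F$, since $F$ is real closed and the sum is nonzero by anisotropy. Thus $F(a)=F(v)\cong F(\sqrt{-1})$. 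By Theorem~\ref{thm:standard}(b), this is the algebraic closure of $F$, and in particular it is algebraically closed.

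\emph{Step 2: $\cent_D(a)=F(a)$.} The inclusion $F(a)\subseteq\cent_D(a)$ is clear. For the reverse inclusion, note that $\cent_D(a)$ is a division subalgebra of $D$ containing $F(a)$. By the double centralizer theorem, $[\cent_D(a):F]\cdot[F(a):F]=[D:F]=4$, so $[\cent_D(a):F]=2$ and equality follows.

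To keep the argument elementary, one can instead proceed directly. It suffices to show $\cent_D(v)=F+Fv$ for a nonzero pure quaternion $v$. Decompose an arbitrary $x$ as $x_0+w$ with $w$ pure. For pure quaternions one has $vw-wv=2(v\times w)$, so $x$ commutes with $v$ exactly when the cross product $v\times w$ vanishes, that is, when $w$ is an $F$-multiple of $v$.

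The only mildly delicate point is justifying this last "parallel" step over a general real closed field. It follows from the identity $|v\times w|^2=|v|^2|w|^2-(v\cdot w)^2$ together with anisotropy of the sum of squares over $F$. Alternatively, the double centralizer theorem avoids the issue entirely, and I would cite that.
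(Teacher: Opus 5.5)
Your proposal is correct and follows essentially the same route as the paper. The quadratic (reduced characteristic) equation gives $[F(a):F]=2$, the double centralizer theorem gives $\dim_F \cent_D(a)=2$ and hence $\cent_D(a)=F(a)$, and Artin--Schreier makes the quadratic extension $F(a)$ algebraically closed. Your explicit identification $F(a)\cong F(\sqrt{-1})$ and the cross-product alternative are valid extras. In the latter, no anisotropy argument is needed: $v\times w=0$ with $v\neq 0$ forces $w\in Fv$ over any field, because the coordinates of $v\times w$ are the $2\times 2$ minors of the matrix with rows $v,w$.
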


\begin{proof}
Quaternionic conjugation gives
\[
a^2-(a+\overline a)a+a\overline a=0,
\]
whose coefficients lie in $F$. Since $a\notin F$, its minimal polynomial over $F$ has degree two, and hence $[F(a):F]=2$. By the double-centralizer theorem,
\[
\dim_F\cent_D(a)=\frac{\dim_F D}{[F(a):F]}=2;
\]
see \cite[Section~12.7]{Pierce}. Since $F(a)\subseteq\cent_D(a)$ and both have dimension two over $F$, they are equal. Finally, a proper finite extension of a real closed field is its quadratic algebraic closure, so Theorem~\ref{thm:standard}(b) shows that $F(a)$ is algebraically closed.
\end{proof}

\begin{lemma}\label{lem:centrally-closed}
Let $F$ be real closed and $D=(-1,-1)_F$. Then $D$ is centrally algebraically closed.
\end{lemma}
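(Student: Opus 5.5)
We must show that for every finite tuple $a_1,\ldots,a_r$ of pairwise commuting elements of $D$, the centralizer $\cent_D(a_1,\ldots,a_r)$ is right algebraically closed. The plan is to split into two cases according to whether all the $a_i$ are central.

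\emph{Case 1: every $a_i$ lies in $F$.} Then the centralizer is $D$ itself. By Theorem~\ref{thm:NJB}, (iv)$\Rightarrow$(ii), $D=(-1,-1)_F$ over the real closed field $F$ is right algebraically closed. This also covers the empty tuple, if that is allowed.

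\emph{Case 2: some $a_j\notin F$.} By Lemma~\ref{lem:centralizer-one}, $\cent_D(a_j)=F(a_j)$, and this is an algebraically closed field. Every $a_i$ commutes with $a_j$, so $a_i\in\cent_D(a_j)=F(a_j)$. Hence every $a_i$ lies in the commutative field $F(a_j)$, and so $F(a_j)\subseteq\cent_D(a_1,\ldots,a_r)$. Conversely, anything commuting with all $a_i$ commutes in particular with $a_j$, giving $\cent_D(a_1,\ldots,a_r)\subseteq\cent_D(a_j)=F(a_j)$. Therefore
\[
\cent_D(a_1,\ldots,a_r)=F(a_j).
\]
This is a commutative algebraically closed field. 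For a commutative ring, right roots are just ordinary roots, so it is right algebraically closed.

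Combining the two cases, $D$ is centrally algebraically closed. No step is a genuine obstacle: the only substantive inputs are the Niven--Jacobson direction of Theorem~\ref{thm:NJB} and Lemma~\ref{lem:centralizer-one}. The one point needing care is checking that the centralizer in Case 2 is exactly $F(a_j)$, which follows from the two inclusions above.
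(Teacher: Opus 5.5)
Your proof is correct and follows essentially the same route as the paper: you split on whether all $a_i$ are central, use Theorem~\ref{thm:NJB} in the first case, and use Lemma~\ref{lem:centralizer-one} in the second to identify the common centralizer with $F(a_j)$. You also spell out the reverse inclusion $\cent_D(a_1,\ldots,a_r)\subseteq\cent_D(a_j)$ and the fact that an algebraically closed field is right algebraically closed, both of which the paper leaves implicit.
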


\begin{proof}
Let $a_1,\ldots,a_r$ be pairwise commuting elements of $D$. If all of them lie in $F$, then their common centralizer is $D$, which is right algebraically closed by Theorem~\ref{thm:NJB}. Otherwise choose $a_i\notin F$. Lemma~\ref{lem:centralizer-one} gives
\[
\cent_D(a_i)=F(a_i).
\]
Every $a_j$ belongs to this commutative field because it commutes with $a_i$. Consequently, every element of $F(a_i)$ commutes with all the $a_j$, and hence
\[
\cent_D(a_1,\ldots,a_r)=F(a_i).
\]
This field is algebraically closed by Lemma~\ref{lem:centralizer-one}. Thus every common centralizer of a commuting tuple is right algebraically closed.
\end{proof}

\begin{proposition}\label{prop:positive}
Let $F$ be real closed and let
\[
D=(-1,-1)_F.
\]
Then $D$ is an \AS{} ring.
\end{proposition}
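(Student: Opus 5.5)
The plan is to reduce Proposition~\ref{prop:positive} to the already assembled machinery of Theorem~\ref{thm:nullstellensatz}(b). That result says that for a right algebraically closed division ring, being \AS{} is equivalent to being centrally algebraically closed. So it suffices to verify two hypotheses for $D=(-1,-1)_F$ with $F$ real closed.

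First, $D$ must be right algebraically closed. Since $D$ is noncommutative, has finite dimension $4$ over its center $F$, and satisfies condition (iv) of Theorem~\ref{thm:NJB}, the implication (iv)$\Rightarrow$(ii) of that theorem gives right algebraic closedness. This is the Niven--Jacobson theorem. I would note that the center of $(-1,-1)_F$ is indeed $F$, since quaternion algebras are central, and that $D$ is a division algebra by the anisotropy of the norm recorded in the preliminaries.

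Second, $D$ must be centrally algebraically closed. This is exactly Lemma~\ref{lem:centrally-closed}. With both hypotheses in hand, Theorem~\ref{thm:nullstellensatz}(b) yields that $D$ is \AS{}, and even a Nullstellensatz ring.

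I expect no step here to be a genuine obstacle, since the substance lies in the cited theorems of Aryapoor and of Chapman and the author. The only point requiring care is to match definitions. The \AS{} property in Theorem~\ref{thm:nullstellensatz}(b) should be the same notion as in the Definition, covering all $n\ge2$ and all $k<n$. Also, ``centrally algebraically closed'' in Lemma~\ref{lem:centrally-closed} includes the empty tuple, whose centralizer is $D$ itself; that case is covered by the first step. If one preferred to avoid the \AS{}/Nullstellensatz equivalence, one could argue directly. Via part (a), every maximal left ideal of $D[x_1,\ldots,x_n]$ has the form $\sum D[\underline{x}](x_i-a_i)$ with the $a_i$ commuting. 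Its contraction to $D[x_1,\ldots,x_k]$ contains $\sum_{i\le k} D[x_1,\ldots,x_k](x_i-a_i)$, which is maximal by part (a) again. The contraction is also proper, so it equals that ideal. This recovers the \AS{} property directly.
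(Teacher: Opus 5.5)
Your proposal is correct and matches the paper's proof: right algebraic closedness from Theorem~\ref{thm:NJB}, central algebraic closedness from Lemma~\ref{lem:centrally-closed}, and then Theorem~\ref{thm:nullstellensatz}(b). Your alternative argument via part (a) is also sound; it simply spells out the easy direction that a Nullstellensatz ring is \AS{}.
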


\begin{proof}
By Theorem~\ref{thm:NJB}, the division ring $D$ is right algebraically closed. By Lemma~\ref{lem:centrally-closed}, it is centrally algebraically closed. Theorem~\ref{thm:nullstellensatz}(b), which combines \cite[Theorem~4.2]{ChapmanAuthor} with Aryapoor's central Nullstellensatz, now implies that $D$ is \AS{}.
\end{proof}

\begin{remark}
For the application of \cite[Theorem~4.2]{ChapmanAuthor}, one-sided algebraic closedness must be supplemented by the Nullstellensatz property. In the quaternionic case this additional condition follows from the centralizer computation above and \cite[Section~4]{Aryapoor}.
\end{remark}

\section{Classification}\label{sec:classification}

\begin{proof}[Proof of Theorem~\ref{thm:main}]
If $D=F$, then $D$ is \AS{} by the commutative case. If $F$ is real closed and $D\cong(-1,-1)_F$, then $D$ is \AS{} by Proposition~\ref{prop:positive}.

Conversely, suppose that $D$ is \AS{}. If $D$ is commutative, then $D=F$. Suppose that $D$ is noncommutative, and choose a maximal subfield $K\subseteq D$ which is separable over $F$. By Theorem~\ref{thm:standard}(c), the field $K$ is a finite separable splitting field of $D$. Corollary~\ref{cor:splitting-closed} therefore shows that $K$ is algebraically closed. Every nonconstant polynomial in $F[x]$ consequently has a root in $K\subseteq D$. Theorem~\ref{thm:NJB}, in Baer's central-root form, now gives
\[
F\text{ real closed},
\qquad D\cong(-1,-1)_F.
\]
The equivalence with left and right algebraic closedness follows from the remaining parts of Theorem~\ref{thm:NJB}.

It remains to prove the final assertion. Suppose that $D$ is noncommutative and is not of the displayed form. Choose a maximal subfield $K\subseteq D$ which is separable over $F$. If $K$ were algebraically closed, every nonconstant polynomial in $F[x]$ would have a root in $K\subseteq D$, and Theorem~\ref{thm:NJB} would imply that $F$ is real closed and $D\cong(-1,-1)_F$, a contradiction. Thus $K$ is not algebraically closed. Since $K$ is a finite separable splitting field by Theorem~\ref{thm:standard}(c), Proposition~\ref{prop:obstruction} supplies a maximal left ideal $M\subseteq D[x,y]$ such that $M\cap D[x]$ is not maximal in $D[x]$.
\end{proof}

We conclude with a concrete specialization in which the maximal left ideal has particularly simple generators.

\begin{example}[The field of real constructible numbers]\label{ex:constructible}
Let $F=\mathbb{Q}_{\mathrm{constr}}$ be the field of real constructible numbers, and put
\[
D=(-1,-1)_F.
\]
Recall that every element of $F$ lies in the top field of a finite tower of quadratic extensions of $\mathbb{Q}$. Since $F\subseteq\mathbb{R}$, it is formally real. Moreover, if $a,b\in F$, then $\sqrt{a^2+b^2}$ is constructible, so $F$ is Pythagorean. In particular, the reduced norm of $D$ is anisotropic and $D$ is a division algebra. It is not real closed, since $\sqrt[3]{2}\notin F$. Hence $D$ is not \AS{} by Theorem~\ref{thm:main}.

We now write down an explicit counterexample. Let
\[
K=F(\varepsilon),
\qquad \varepsilon^2=-1,
\qquad \alpha=\sqrt[3]{2},
\qquad E=K(\alpha).
\]
The element $\alpha$ does not belong to $K$: every element of $K$ is contained in a finite extension of $\mathbb{Q}$ of degree a power of two, whereas $T^3-2$ is irreducible over $\mathbb{Q}$ by Eisenstein's criterion and hence $[\mathbb{Q}(\alpha):\mathbb{Q}]=3$. The same argument applies to every root of $T^3-2$, so the polynomial has no root in $K$. Since it has degree three, it is irreducible over $K$.

Since $F$ is formally real, $-1$ is not a square in $F$. Thus $[F(i):F]=2=\deg D$, so $F(i)$ is a maximal subfield of $D$ and hence splits $D$ by Theorem~\ref{thm:standard}(c). Let
\[
\sigma:F(i)\longrightarrow K
\]
be the $F$-isomorphism determined by $\sigma(i)=\varepsilon$. The induced isomorphism
\[
\operatorname{id}_D\otimes\sigma:
D\otimes_F F(i)\longrightarrow D\otimes_F K
\]
shows that $K$ also splits $D$. Put $B=D_K$ and
\[
e=\frac{1-\varepsilon i}{2}\in B.
\]
Since $(\varepsilon i)^2=1$, the element $e$ is an idempotent. If $z=\varepsilon i$, then $zj=-jz$. Using the $K$-basis $1,z,j,zj$ of $B$, one obtains
\[
Be=Ke\oplus Kje,
\qquad eBe=Ke.
\]
Since $eBe$ is a division ring, $e$ is primitive; equivalently, $V=Be$ is a minimal nonzero left ideal of the simple Artinian ring $B$, and hence is a simple left $B$-module. See \cite[Chapter~1, Sections~2--3]{Lam}. It has $K$-basis
\[
u_1=e,
\qquad u_2=-je.
\]
The annihilator $\operatorname{Ann}_B(V)$ is a two-sided ideal of the simple ring $B$. Since $V\neq0$, this annihilator is proper and therefore zero. Thus $V$ is faithful.

In $V_E=V\otimes_K E$, set
\[
w=u_2\otimes1-u_1\otimes\alpha=-(j+\alpha)e
\]
and
\[
N=\operatorname{Ann}_{D_E}(w).
\]
By Lemma~\ref{lem:annihilator}, the left ideal $N$ is maximal. Since $u_1,u_2$ form a basis of the faithful $B$-module $V$, Lemma~\ref{lem:annihilator} gives
\[
N\cap B=0.
\]

Let
\[
h=(1-\varepsilon i)(1+\alpha j)=2e(1+\alpha j)\in D_E.
\]
The relation $eje=0$ follows from $zj=-jz$, and hence
\[
hw
= -2e(1+\alpha j)(j+\alpha)e
= -2(1+\alpha^2)eje
=0.
\]
Therefore $D_Eh\subseteq N$. The element $1+\alpha j$ is invertible, with inverse
\[
\frac{1-\alpha j}{1+\alpha^2}.
\]
Here $1+\alpha^2\ne 0$, since $T^3-2$ is the minimal polynomial of $\alpha$ over $K$. Right multiplication by $1+\alpha j$ therefore identifies $D_Ee$ with $D_Eh$, so $D_Eh$ has dimension two over $E$. The evaluation sequence in the proof of Lemma~\ref{lem:annihilator} gives $D_E/N\cong V_E$, and hence $N$ also has dimension two over $E$. Consequently,
\[
N=D_Eh.
\]

Let $R=D[s,t]$, and consider the left ideal
\begin{equation}\label{eq:explicit}
M=R(s^2+1)+R(t^3-2)+R(1-si)(1+tj).
\end{equation}
Under the epimorphism
\[
\Phi:R\longrightarrow D_E,
\qquad s\longmapsto\varepsilon,
\qquad t\longmapsto\alpha,
\]
the first two generators of \eqref{eq:explicit} vanish and the third maps to $h$. Remark~\ref{rem:generators} gives
\[
\ker\Phi=R(s^2+1)+R(t^3-2).
\]
Since $N=D_Eh$, it follows that $M=\Phi^{-1}(N)$, and hence $M$ is maximal.

The image of $D[s]$ is $B=D_K$. Since $N\cap B=0$, Lemma~\ref{lem:pullback} gives
\[
M\cap D[s]
=
\ker\bigl(D[s]\longrightarrow D_K\bigr)
=
D[s](s^2+1).
\]
Finally,
\[
D[s]/D[s](s^2+1)\cong D_K=B.
\]
The zero left ideal of $B$ is not maximal, since $Be$ is a nonzero proper left ideal. Thus the displayed contraction is not maximal.
\end{example}

\section*{Acknowledgements}
\noindent\textbf{Declaration of AI use.} This paper was written with significant AI assistance. The model used was ChatGPT 5.5 pro. The starting point was however a human proof: The example of the field of constructible numbers (Example \ref{ex:constructible}) was found by the author in order to answer a question that was raised by Chapman and the author in \cite{ChapmanAuthor}. This Example was ``fed'' to the AI model, which led to further and further generalizations via a series of prompts, ending in the main theorem of this paper. The initial proof however was more complicated than the one given here: The AI model seemed ``unaware'' of the theorem of Baer characterizing centrally finite algebraically closed division rings as quaternion algebras over a real-closed field. By pointing the model to this theorem, a simpler and more direct proof was achieved -- this is the proof given in this manuscript. The end result is a human-AI collaborative work.

\end{document}